\newcommand{\bC}{\mathbb{C}}
\newcommand{\bP}{\mathbb{P}}
\def\ord{\mathop{\rm ord}}
\def\grad{\mathop{\rm grad}}
\def\lc{\mbox{\rm lc}}
\def\Zer{\mathop{\rm Zer}}
\def\ini{\mathop{\rm in}}
\newtheorem{Theorem}{Theorem}[section]
\newtheorem{Lemma}[Theorem]{Lemma}
\newtheorem{Corollary}[Theorem]{Corollary}
\newtheorem{Definition}[Theorem]{Definition}
\newtheorem{Example}[Theorem]{Example}
\newenvironment{proof}[1][Proof]{\textbf{#1.} }{\
\rule{0.5em}{0.5em}}
\begin{document}

\title{Ephraim's Pencils}
\author{Janusz Gwo\'zdziewicz}
\maketitle

\begin{abstract} 
Let $f(x,y)=0$ and $l(x,y)=0$ be respectively a singular and a regular analytic curve 
defined in the neighborhood of the origin of the complex plane. 
We study the family of analytic curves $f(x,y)-t\, l(x,y)^M=0$, where $t\neq 0$ is a 
complex parameter. For all but a finite number of parameters the curves of this family have 
the same embedded topological type. The exceptional parameters are called special values. 
We show that the number of special values does not exceed the number of components 
of the curve $f(x,y)=0$ counted without multiplicities. Then we apply this result to estimate 
the number of critical values at infinity of complex polynomials in two variables. 
\end{abstract}

\section{Main result}
Let $f(x,y)$, $g(x,y)\in\bC\{x,y\}$, $f(0,0)=g(0,0)=0$ be convergent power series. 
A family $f(x,y)-t\,g(x,y)=0$, where $t\in\bC$ is a parameter, is called a pencil of plane analytic curves. 
For all but a finite number of parameters the curves of the pencil have the same embedded topological type. 
We call $t_0$ a special value if the curve $f(x,y)-t_0g(x,y)=0$ is not topologically equivalent with 
a generic curve of a pencil. 

In this paper we consider pencils of the form 
$$ f(x,y)-t\,l(x,y)^M=0,
$$
where $l(x,y)=0$ is a smooth curve 
i.e. $l(x,y)=ax+by+\mbox{\textit{higher order terms}}$,  $ax+by\not\equiv0$ 
and $M$~is a positive integer.  
They will be called  Ephraim's pencils 
after Ephraim who used them in \cite{Ephraim} to study the singularities at infinity 
of plane algebraic curves  (in~\cite{Canbel} they are called Iomdin L\^e deformations). 
The main result of this article is 

\begin{Theorem}\label{Th:main}
Let $f(x,y)$, $l(x,y)\in\bC\{x,y\}$, $f(0,0)=l(0,0)=0$ be convergent power series 
without common factor. 
Assume that the curve $l(x,y)=0$ is smooth and that the curve
$f(x,y)=0$ has~$d$ components counted without multiplicities. 
Then the pencil $f(x,y)-t\,l(x,y)^M=0$, 
where $M$ is a positive integer, has at most $d$ nonzero special values. 
\end{Theorem}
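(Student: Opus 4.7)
The plan is to (1) reduce to the case $l(x,y)=x$ via a biholomorphism of $(\bC^2,0)$; (2) identify every nonzero special value of the pencil with a finite nonzero limit at the origin of the meromorphic germ $\phi:=f/x^M$ restricted to a branch of the polar curve $P:=\{f_y=0\}$; and (3) group the branches of $P$ into $d$ ``bunches'' attached to the irreducible components of $f$, and prove that each bunch produces at most one such limit.

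\textbf{Reduction and criterion.} Since $l$ is smooth, a biholomorphism of $(\bC^2,0)$ sends $l$ to $x$ while preserving embedded topological types, so we may replace the pencil by $F_t:=f-tx^M$. A topological change of the germ $(F_t=0,0)$ at a value $t_0\neq 0$ must be witnessed by a one-parameter family of singular points of $F_t$ converging to the origin as $t\to t_0$. Any such family automatically lies on the polar curve $P=\{f_y=0\}$, and along it one has $t=f/x^M$, so $t_0=\lim_{s\to 0}\phi(p(s))$ for some branch $p$ of $P$ at the origin. Consequently the set of nonzero special values is contained in the finite set $\{\lim_{s\to 0}\phi(p(s))\in\bC^*:p\text{ a branch of }P\}$.

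\textbf{Bunching and main obstacle.} Decompose $f=u\prod_{i=1}^d f_i^{m_i}$ into distinct irreducible factors and fix a Puiseux parameterization of each $\gamma_i:=\{f_i=0\}$. A Merle-type decomposition (also in the spirit of Kuo--Lu) partitions the branches of $P$ into $d$ disjoint bunches $P_1,\ldots,P_d$, where $P_i$ groups the polar branches attached to $\gamma_i$. The heart of the proof is the claim that, for every $i$, all branches of $P_i$ on which $\phi$ has a finite nonzero limit yield the same limit value; given this, each bunch $P_i$ contributes at most one nonzero special value and the bound $d$ follows. The principal obstacle is precisely this uniqueness statement. Writing $f=f_i^{m_i}g_i$ with $g_i(\gamma_i)\neq 0$ and expanding $f$ and $x^M$ along a branch of $P_i$ by Puiseux series, one must show that the leading order of the ratio $\phi=f/x^M$ is controlled by the initial Puiseux data of $\gamma_i$ and by $M$ alone, independently of the particular polar branch chosen inside $P_i$. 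Handling branches $\gamma_i$ with several characteristic pairs (where the bunch $P_i$ itself further refines according to these pairs) is delicate; the smoothness of $l$, which forces $x^M$ to have homogeneous behavior along every Puiseux branch, is used essentially at this step.
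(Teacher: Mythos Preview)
Your reduction to $l=x$ and your criterion in step~(2) are correct and amount to the paper's Theorem~\ref{T2} (condition~(iii)), obtained there via Casas-Alvero's discriminant theorem rather than through families of singular points; so far the two approaches coincide.

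The gap is in step~(3). There is no Merle-type decomposition of the polar of a \emph{reducible} germ that partitions its branches into $d$ packets indexed by the components of $f$ with the property you want. Already for $f=(x^2-a_1y^3)(x^2-a_2y^3)$ (so $d=2$) and $M=6$, the polar $f'_x=2x(2x^2-(a_1+a_2)y^3)$ has two irreducible branches, the line $x=0$ and the cusp $2x^2=(a_1+a_2)y^3$; each of them has \emph{the same} contact order $3/2$ with both components of $f$, so no contact-based rule ``attaches'' either polar branch to a particular $\gamma_i$. Yet they produce two distinct nonzero limits $a_1a_2$ and $-\tfrac14(a_1-a_2)^2$. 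Your heuristic ``the leading order of $\phi=f/x^M$ is controlled by the initial Puiseux data of $\gamma_i$ and by $M$ alone'' is simply false here: both limits depend on $a_1$ \emph{and} $a_2$ simultaneously. More generally, when several components meet at a common bar $B$ of the Kuo--Lu tree with $q(B)=M$, the polynomial $F_B$ can have several distinct nonzero critical values, and none of them is determined by a single component.

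What the paper actually does is count on the tree, not on the components. The nonzero special values for a fixed $M$ are exactly the nonzero critical values of the polynomials $F_B$ over the bars $B$ with $q(B)=M$ (Theorem~\ref{T3}); conjugation collapses these bars to an \emph{antichain} $E_M$ in the Eggers tree, and for each class $[B]$ one shows (Corollary~\ref{Wn:E}, using $F_B(z)=G_B(z^n)$) that $F_B$ has at most $t([B])$ nonzero critical values, where $t([B])$ is the number of children of $[B]$. The bound then follows from the purely combinatorial fact that the children of an antichain in a rooted tree number at most the leaves, i.e.\ $\sum_{[B]\in E_M}t([B])\le d$. It is this antichain argument---not a one-value-per-component statement---that makes the count work.
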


\begin{Example}\label{Ex:1}
Let $H(Z)=(Z-a_1)\cdots(Z-a_d)$ be a complex polynomial with
$d-1$ critical values. Consider the pencil~$f_t(x,y)=0$ where 
$ f_t(x,y)=y^dH(x/y)-ty^d$.  A generic curve of this pencil consists of~$d$ 
straight lines crossing at the origin. 
The curve $f_{t_0}(x,y)=0$ is special if and only if 
$t_0$ is a critical value of the polynomial $H$; then $f_{t_0}(x,y)=0$ consists of $d-2$ single straight 
lines and one double line. 
\end{Example}

\begin{Example}\label{Ex:2}
Let $H(Z)=\prod_{i=1}^d(Z-a_i)^{m_i}$ be a complex polynomial
with $d-1$  nonzero  critical values different from $H(0)$. 
Consider the family of quasihomogeneous polynomials $ f_t(x,y)=\prod_{i=1}^d(x^2-a_iy^3)^{m_i}-ty^{3\deg H}$. 
For $t=0$ the curve $f_t(x,y)=0$ consists of $d$ multiple cusps $(x^2-a_iy^3)^{m_i}=0$.  
For $t=H(0)$ the curve $f_t(x,y)=0$ consists of $\deg H-1$ cusps and one double line $x^2=0$. 
If $t$ is a nonzero critical value of $H$ then the curve $f_t(x,y)=0$ has $\deg H-2$ single cusps and 
one double cusp.  For all other $t\in\bC$ the curve $f_t(x,y)=0$ decomposes into $\deg H$ 
pairwise different cusps of the form $x^2-b_iy^3=0$ for~$i=1,\dots,\deg H$. 
\end{Example}

This example shows that the bound for the number of special values 
given in Theorem~\ref{Th:main} is sharp.  

\section{Critical values at infinity} 
Let $f$ be a complex polynomial of two variables. 
The complex number $\tau$ is called a \textit{critical value at infinity} 
of the polynomial $f$ if there exists a sequence of points~$\{p_k\}\in\bC^2$ such that 
$|p_k|\to\infty$, $\grad f(p_k)\to 0$  and $f(p_k)\to \tau$ as~$k\to \infty$.  Several 
equivalent definitions are in \cite{Durfee}. 

Many authors described the set of critical values at infinity. 
L\^e and Oka in \cite{LVT-Oka} proposed the bound for their number 
in terms of the Newton diagram of $f$ and quasi-homogeneous polynomials associated with the 
edges of the Newton diagram.  Their bound was improved in \cite{L-M}. 

In \cite{G-P} is shown that every polynomial $f$ with finite set of critical points 
has at most $\min(1,\deg f-3)$ critical values at infinity. 

Below we give an estimation by the number of branches at infinity. 

\begin{Theorem}\label{Th:Crit}
Assume that the complex algebraic curve $f(x,y)=0$  has $n$ branches at infinity counted without multiplicities. 
Then the polynomial~$f$ has at most $n$ critical values at infinity different from $0$. 
\end{Theorem}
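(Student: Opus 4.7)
The plan is to localize the problem at the line at infinity and apply Theorem~\ref{Th:main} pointwise at each point of $L_\infty$ where the affine curve acquires a branch. I would start by embedding $\bC^2\subset\bP^2$ with $L_\infty$ the line at infinity, setting $d=\deg f$, and homogenizing $f$ to $F(X,Y,Z)$ of degree $d$. The family $\{f=\tau\}$ extends to the pencil $\{F-\tau Z^d=0\}$ in $\bP^2$, and since $F(X,Y,0)$ is the top form of $f$ and hence independent of $\tau$, all members of the pencil meet $L_\infty$ in the same finite set $\{p_1,\dots,p_k\}$ of zeros of $F(X,Y,0)$.

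Next, near each $p_i$ I would choose analytic coordinates $(y,z)$ so that $L_\infty=\{z=0\}$ and $p_i$ sits at the origin, and let $\tilde F_i(y,z)$ be a local equation at $p_i$ for the projective closure of $\{f=0\}$. In these coordinates the family becomes locally $\tilde F_i(y,z)-\tau z^d=0$. Because the closure of $\{f=0\}$ does not contain $L_\infty$ as a component (its top form is nonzero), $\tilde F_i$ and $z$ share no common factor, so this is exactly an Ephraim pencil with smooth curve $z=0$ and exponent $M=d$. Theorem~\ref{Th:main} then bounds the number of nonzero special values of the local pencil at $p_i$ by $n_i$, the number of branches at $p_i$ of the closure of $\{f=0\}$ counted without multiplicity. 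Summing over $i$, the total number of nonzero $\tau$ at which the embedded topological type of $\tilde F_i-\tau z^d=0$ at some $p_i$ differs from the generic one is at most $\sum_i n_i=n$.

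Finally, I would argue that every nonzero critical value at infinity of $f$ appears as a special value of one of these local pencils. The point is the standard equisingularity-at-infinity principle: if for every $i$ the embedded topological type of $\tilde F_i-\tau z^d=0$ at $p_i$ is constant in a neighborhood of $\tau_0$, then local topological trivializations around the points $p_i$ glue to a topological trivialization of $f$ on a neighborhood of $L_\infty$ over a neighborhood of $\tau_0$, and this rules out $\tau_0$ being a critical value at infinity. Combined with the previous paragraph, this yields the desired bound.

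The hard part, and the only step that is not essentially mechanical, will be this last one: rigorously matching nonzero critical values at infinity with special values of the local Ephraim pencils. It requires invoking a known characterization of critical values at infinity in terms of the failure of embedded equisingularity at some point of $L_\infty$; the remainder of the proof is the direct translation of the global problem into finitely many local Ephraim pencils, each controlled by Theorem~\ref{Th:main}.
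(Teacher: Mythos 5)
Your proposal follows essentially the same route as the paper: embed in $\bP^2$, recognize the local family at each point of $C_t\cap L_\infty$ as an Ephraim pencil with $M=\deg f$, apply Theorem~\ref{Th:main} to bound the special values by the number of branches there, and invoke the characterization of critical values at infinity via failure of local equisingularity along $L_\infty$. The ``hard part'' you flag is exactly what the paper disposes of by citing \cite{Durfee} (see also \cite{LVT-Oka}), so your argument is complete once that reference is supplied.
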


This result is in the spirit of Moh's theorem quoted below in Ephraim's version~\cite{Ephraim}.

\begin{Theorem}
Assume that the complex algebraic curve $f(x,y)=0$ has only one branch at infinity. 
Then $f$ has no critical values at infinity. In particular 
all curves $f(x,y)=\tau$ for $\tau\in \bC$ are equisingular at infinity. 
\end{Theorem}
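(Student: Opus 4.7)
My plan is to deduce Moh's theorem from Theorem~\ref{Th:main} applied to the Ephraim pencil obtained by localising at the unique point of $L_\infty$ on $\overline{\{f=0\}}$. Let $d=\deg f$. Since $\overline{\{f=\tau\}}\cap L_\infty$ is controlled by the leading form of $f$, it is the same finite set for all $\tau$; by hypothesis this set is a single point $P_0$. In local affine coordinates $(u,v)$ at $P_0$ with $L_\infty=\{v=0\}$, the fiber $f=\tau$ is locally defined by
\[
\tilde F_0(u,v)-\tau v^d=0,
\]
where $\tilde F_0\in\bC\{u,v\}$ is the local equation of $\overline{\{f=0\}}$ at $P_0$; the one-branch-at-infinity hypothesis makes $\tilde F_0$ irreducible at the origin. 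Theorem~\ref{Th:main} (with $l=v$, $M=d$, and a single local component) then bounds the number of nonzero special values of this Ephraim pencil by one.

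The key step is to upgrade this to \emph{no} special values at all, so that the whole pencil is equisingularly trivial. I would exploit the fact that the single branch of $\tilde F_0=0$ at the origin has intersection multiplicity exactly $d$ with $v=0$: parametrising the branch as $t\mapsto(u(t),v(t))$ with $v(t)=t^d\cdot(\text{unit})$ and $u(t)=t^m\cdot(\text{unit})$ for some $m\le d$, the perturbation $\tau v(t)^d$ has order $d^2$ in $t$, whereas the Milnor number of the irreducible germ $\tilde F_0=0$ is bounded above by an expression like $(m-1)(d-1)$. A short verification gives $d^2>(m-1)(d-1)$, so the deformation $\tilde F_0-\tau v^d$ exceeds the Milnor-number threshold that guarantees topological triviality of a one-parameter deformation of an irreducible plane curve singularity. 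Hence $\tilde F_0-\tau v^d=0$ is equisingular to $\tilde F_0=0$ near the origin for every $\tau\in\bC$, and in particular no fiber acquires any extra branch near $P_0$; the germ at $P_0$ is always a single branch of the same topological type as the original.

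Since $P_0$ is the only point at infinity on each fiber, equisingularity of the local germ at $P_0$ is exactly equisingularity at infinity, so every curve $\{f=\tau\}$ shares the topological type at infinity of $\{f=0\}$. By the equivalence of the various definitions of critical values at infinity recorded in \cite{Durfee}, this forces $f$ to have no critical values at infinity at all. The main obstacle is the equisingular-triviality estimate in the second paragraph: Theorem~\ref{Th:main} bounds only the \emph{nonzero} special values, so one has to supplement it with the computation that $\tau v^d$ is of order $d^2$ along the branch---high enough that neither $\tau=0$ nor the one potentially nonzero special value can actually be exceptional.
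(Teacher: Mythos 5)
Your first paragraph is fine and matches the reduction used in the paper's proof of Theorem~\ref{Th:Crit}: localizing at the unique point $P_0$ at infinity turns the family of fibers into the Ephraim pencil $\tilde F_0(u,v)-\tau v^d=0$, and Theorem~\ref{Th:main} gives at most one nonzero special value since $\tilde F_0$ is irreducible. (Note, though, that the paper does not prove the quoted theorem at all --- it is cited from Ephraim --- so everything hinges on your second paragraph.) That second paragraph contains a genuine and fatal gap. First, the inequality $\mu\le (m-1)(d-1)$ goes the wrong way: for an irreducible germ parametrized by $(t^m\cdot\mathrm{unit},\,t^d\cdot\mathrm{unit})$ one has $\mu\ge (m-1)(d-1)$, with equality only when the semigroup is $\langle m,d\rangle$. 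More importantly, there is no theorem saying that a perturbation whose order \emph{along the branch} exceeds the Milnor number (or $d^2>\mu$, or any such soft bound) forces equisingularity of the pencil. The correct threshold is the maximal polar invariant $\max_B q(B)$ of $\tilde F_0$ relative to $v$ (equivalently the local \L{}ojasiewicz exponent of the gradient), and by the characterization recalled in the paper's last section the pencil $\tilde F_0-\tau v^M=0$ has \emph{no} special values precisely when $M>q(B)$ for every $B\in T(\tilde F_0)$. So what you actually need is $d>\max_B q(B)$, and nothing in your argument delivers this.

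Here is a concrete germ satisfying every hypothesis you use and violating your conclusion. Let $\tilde F_0=(u^2-v)^2+4uv^5-v^9$, whose Newton--Puiseux roots are $u=\pm v^{1/2}\pm v^{9/4}$. It is irreducible (one branch), $(\tilde F_0,\{v=0\})_0=\ord_u\tilde F_0(u,0)=4=:d$, the parametrization is $(t^2+t^9,t^4)$ so $m=2<d$, and $\mu=10<16=d^2$. Yet the polar invariants are $2$ and $11/2$, so $\max_B q(B)=11/2>4$ and $\tau=0$ is a special value of $\tilde F_0-\tau v^4=0$: indeed for generic $\tau$ the germ splits as $(u^2-v-\sqrt\tau\,v^2+\cdots)(u^2-v+\sqrt\tau\,v^2+\cdots)$ into two branches, while $\tilde F_0$ itself is irreducible. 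Your criterion would wrongly declare this family equisingular. Of course this germ cannot occur at infinity on a quartic with one place at infinity (already $\mu=10>(d-1)(d-2)=6$), but that is exactly the point: the inequality $d>\max_B q(B)$ for one-place-at-infinity curves is the Abhyankar--Moh/Moh inequality, a genuinely global statement about the semigroup at infinity, and it cannot be extracted from the local data ``one branch meeting $L_\infty$ with multiplicity $d$'' together with a Milnor-number count. A correct proof along these lines must import the Abhyankar--Moh semigroup theorem (which is what Ephraim does), and that is the missing idea here.
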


\begin{Example}\label{Ex:3}
Let $H(x)=(x-a_1)\cdots(x-a_d)$ be a complex polynomial with
$d-1$ non-zero critical values and let $f(x,y)=H(x)$. 
As the curve $f(x,y)=0$ is the union of $d$ straight lines it has $d$ branches at infinity. 
The critical values at infinity of $f$ are exactly the critical values of $H$. 
\end{Example}

Next example is taken from \cite{G-S}.

\begin{Example}\label{Ex:4}
Let $A=x^k-1$, $B=x^{k+1}/(k+1)-x$ and $f(x,y)=(A^2y-x)^2+B$. 
The polynomial $f$ has no critical points in $\bC^2$ and has
$2k$ non-zero critical values at infinity. 
One can check that if $k$ is even then the curve $f(x,y)=0$ has 
$2k+1$ branches at infinity.  
\end{Example}

Example~\ref{Ex:4} shows that Theorem~\ref{Th:Crit} is almost optimal 
even in the class of polynomials without critical points in $\bC^2$. 

\medskip
\textbf{Question.}
Does there exists a polynomial $f(x,y)$ with $n$ non-zero critical values at infinity 
such that the curve $f(x,y)=0$ has $n$ branches at infinity?

\medskip
\begin{proof}[Proof of Theorem~\ref{Th:Crit}]
Let us embed $\bC^2$ into the projective plane $\bP^2$
equipped with homogeneous coordinates $X$, $Y$, $Z$ such that $x=X/Z$ and $y=Y/Z$. 
Then the line at infinity $L_{\infty}$ has an equation $Z=0$. 

Write 
$ f(x,y)=f_0+f_1(x,y)+\cdots+f_d(x,y)$
where $f_i(x,y)$ is a homogeneous polynomial of degree~$i$ for $i=0,\dots,d$. 
Since an affine change of coordinates does not affect neither critical values at infinity 
not the number of branches at infinity we may assume (applying it if necessary) that 
$f_d(x,y)$ is not divisible by $y$. 

Let $C_t$ be the projective closure of the curve $f(x,y)=t$. Then $C_t$ is given by the equation
$F(X,Y,Z)-tZ^d=0$, where $F(X,Y,Z)$ is the homogeneous polynomial 
defined by 
$$ F(X,Y,Z)=f(X/Z,Y/Z)Z^d=f_0Z^d+f_1(X,Y)Z^{d-1}+\cdots+f_d(X,Y)\;.
$$
The intersection of $C_t$ and the line at infinity is independent on $t\in\bC$ and 
is a finite set $\{(X;Y;Z)\in\bP^2 \,:\, Z=f_d(X,Y)=0\}$. 

By \cite{Durfee}  (see also \cite{LVT-Oka}) a complex number $\tau$ is a 
critical value at infinity of the polynomial $f$ if and only if there exist $P\in C_t\cap L_{\infty}$
such that the germ~$(C_{\tau},P)$  is a special curve of a family of germs of analytic curves~$(C_t,P)_{t\in\bC}$.
 
Take a point $A=(a;1;0)$ of  $C_t\cap L_{\infty}$ and consider a local analytic system of coordinates 
$u=\frac{X}{Y}-a$, $v=\frac{Z}{Y}$ centered at $A$. In these coordinates the curve $C_t$ has an 
equation $F(u+a,1,v)-tv^d=0$.  It follows that the family of curve germs $(C_t,A)_{t\in\bC}$ is an Ephraim's 
pencil. By Theorem~\ref{Th:main} the number of special curves of $(C_t,A)_{t\in\bC\setminus\{0\}}$ does not 
exceed the number of branches of the curve germ $(C_0,A)$ counted without multiplicities. 

Recall that branches of curve germs $(C_0,P)$, for $P\in C_t\cap L_{\infty}$ are branches
at infinity of $f(x,y)=0$. 
Running over all points of $C_t\cap L_{\infty}$ and using the above mentioned characterization of 
critical values at infinity we get Theorem~\ref{Th:Crit}.
\end{proof}

\section{Proofs}
In this part of the article we prove Theorem~\ref{Th:main} 
using the tree model of $f(x,y)$ introduced by Kuo and Lu in \cite{Kuo-Lu}.

\subsection{Notation}
By a fractional (convergent) power series we mean a series of the form  
$$ \phi(y)=a_1y^{n_1/N}+a_2y^{n_2/N}+\cdots,
   \quad a_i\in\bC,
$$
where $n_1<n_2<\dots$ are positive integers, such that $\phi(t^N)$ has positive 
radius of convergence. If $a_1\neq0$ then by definition the initial part of $\phi$ is 
$\ini\phi=a_1y^{n_1/N}$ and the order of $\phi$ is $\ord\phi=n_1/N$.
By convention the order of the zero series is $+\infty$.
For every fractional power series $\phi(y)$, $\psi(y)$ denote
$O(\phi,\psi)=\ord(\phi(y)-\psi(y))$ and call this number the contact order. 

Let $p(x,y)\in\bC\{x,y\}$ be a convergent power series. 
A fractional power series~$x=\gamma(y)$ is called a Newton-Puiseux root of $p(x,y)$ 
if $p(\gamma(y),y)=0$. 
We denote $\Zer p$ the set of all Newton-Puiseux roots of $p(x,y)$. 

Hereafter ``$+\cdots$'' will mean ``plus higher order terms''.

\subsection{Special values of a pencil}
\begin{Theorem}\label{T2}
Let $f(x,y)$ be a convergent power series  such that $0<\ord f(x,0)<\infty$ 
                and let $M$ be a positive integer. Then the following conditions 
                are equivalent:
\begin{description}
	\item[(i)] $t_0$ is a special value of the pencil $f(x,y)-ty^M=0$, 
	\item[(ii)] the image of the curve $f_x'(x,y)=0$ by the mapping 
	      $\Phi:(\bC^2,0)\to(\bC^2,0)$ given by $(u,v)=(y^M,f(x,y))$ has 
	      a tangent $v-t_0u=0$,
	\item[(iii)] there exists a Newton-Puiseux root $\beta$ of $f_x'(x,y)$ 
	      such that $f(\beta(y),y)=t_0 y^M+\cdots\;$.
 \end{description}
\end{Theorem}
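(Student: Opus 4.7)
\medskip
\textbf{Proof plan for Theorem~\ref{T2}.} I treat the two equivalences separately. The equivalence (ii)$\Leftrightarrow$(iii) is a direct parametric calculation: each branch of $\{f_x'(x,y)=0\}$ at the origin is parametrized by a Newton--Puiseux root $x=\beta(y)$, and its image under $\Phi$ is the parametric arc $(u,v)=(y^M,f(\beta(y),y))$. Reading off its leading terms, the three cases $\ord f(\beta(y),y)<M$, $\ord f(\beta(y),y)=M$ with leading coefficient $c$, and $\ord f(\beta(y),y)>M$ produce tangents $u=0$, $v-cu=0$, and $v=0$ respectively. Such a tangent equals $v-t_0u=0$ exactly when $f(\beta(y),y)=t_0y^M+\cdots$, and since $\Phi(\{f_x'=0\})$ is the union of the images of its branches, (ii) and (iii) coincide.

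For (i)$\Leftrightarrow$(iii), denote by $\{\alpha_i(y,t)\}$ the Newton--Puiseux roots of $f(x,y)-ty^M$ in $x$, by $\{\beta_k(y)\}$ those of $f_x'(x,y)$, and by $\mu_k$ the multiplicity of each $\beta_k$ as a root of $f_x'$. Since $\ord f(x,0)<\infty$, Weierstrass preparation in $x$ turns $f-ty^M$ into a polynomial in $x$ whose degree and $x$-leading coefficient do not depend on $t$. Writing $\mathrm{disc}_x(f-ty^M)$ as a resultant with $f_x'$ and expanding it in two ways---first over the $\alpha_i$, then over the $\beta_k$---and taking orders in $y$ yields the identity
\[
2\sum_{i<j}O\bigl(\alpha_i(y,t),\alpha_j(y,t)\bigr)\;=\;\sum_{k}\mu_k\,\ord\bigl(f(\beta_k(y),y)-ty^M\bigr)\;+\;C,
\]
in which $C$ does not depend on $t$. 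For each $k$, the summand on the right equals $\min(\ord f(\beta_k(y),y),M)$ for every $t$ except the single value equal to the coefficient of $y^M$ in $f(\beta_k(y),y)$, at which the order strictly increases; so the set of $t_0$ at which the right side strictly exceeds its generic value is precisely the set described by condition (iii).

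Each $O(\alpha_i(y,t),\alpha_j(y,t))$ on the left is lower semicontinuous in $t$, because the leading coefficient of $\alpha_i-\alpha_j$ depends algebraically on $t$ and hence vanishes only at isolated values. Consequently, the set of $t_0$ at which the left side strictly exceeds its generic value is the set where the Kuo--Lu tree of contacts of $f-ty^M$ deviates from its generic shape, and by the tree model of \cite{Kuo-Lu} this is exactly the set of special values of the pencil. Since the two sides of the displayed identity differ by a constant, these two sets coincide, yielding (i)$\Leftrightarrow$(iii). The main obstacle is the bookkeeping in the discriminant identity---verifying that Weierstrass preparation contributes only $t$-independent factors after taking orders, tracking multiplicities, and handling the degenerate case where some $\alpha_i$ collide at $t_0$ so that pairwise contacts become $+\infty$---together with invoking the Kuo--Lu equisingularity criterion to turn the algebraic statement ``the sum of pairwise contacts strictly exceeds its generic value'' into the topological statement that $t_0$ is a special value.
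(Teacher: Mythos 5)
Your argument for (ii)$\Leftrightarrow$(iii) is essentially identical to the paper's: both parametrize the image of each branch of $f_x'=0$ under $\Phi$ and read off the tangent from the leading exponents of $u=y^M$ and $v=f(\beta(y),y)$; the only cosmetic difference is that the paper first passes to an analytic parametrization $y=s^n$ to avoid fractional exponents. For the remaining equivalence you take a genuinely different route. The paper handles (i)$\Leftrightarrow$(ii) by a single citation of Casas-Alvero's Theorem~5.1 on discriminants of morphisms, which directly identifies special values of $\Phi^{-1}(\text{line})$ with tangents of the image of the critical locus. You instead prove (i)$\Leftrightarrow$(iii) from scratch via a discriminant/resultant identity of Kuo--Lu type relating $\sum_{i<j}O(\alpha_i(\cdot,t),\alpha_j(\cdot,t))$ to $\sum_k\mu_k\,\ord\bigl(f(\beta_k(y),y)-ty^M\bigr)$, then argue by lower semicontinuity of individual contacts that the sum jumps exactly when the decorated contact tree of $f-t_0y^M$ changes, and finally invoke that the contact tree determines the embedded topological type. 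This route is self-contained in spirit and interestingly shows that Casas-Alvero's theorem is not strictly needed here, but it front-loads quite a bit of machinery that the paper's citation makes free: the precise discriminant identity (including the verification that the Weierstrass unit contributes only a $t$-independent constant to orders), semicontinuity of contacts in the parameter (requiring a careful treatment of Puiseux roots in two variables, and of collisions where contacts become $+\infty$), and, most substantively, the fact that the decorated contact tree is a complete invariant of the topological type of a plane curve germ. You attribute that last step to \cite{Kuo-Lu}, but it is really Zariski's equisingularity theory; Kuo--Lu use the tree to compute Łojasiewicz exponents, not to characterize topological equivalence. None of these gaps is fatal---the sketch is plausible and you flag them honestly---but filling them properly would make your proof considerably longer than the paper's, which is precisely why the paper outsources (i)$\Leftrightarrow$(ii) to Casas-Alvero and only proves (ii)$\Leftrightarrow$(iii) directly.
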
                

\begin{proof}
The equivalence \textbf{(i)}~$\Leftrightarrow$~\textbf{(ii)} 
is a direct consequence of \cite{Casas2}, Theorem~5.1 because 
curves $f(x,y)-ty^M=0$ are inverse images of straight lines 
$v-tu=0$ by $\Phi$. 

In order to prove \textbf{(ii)}~$\Leftrightarrow$~\textbf{(iii)} we will use 
the property that every branch, that is  an irreducible curve germ, has only one tangent.
If a branch $h(u,v)=0$ has an analytic parametrization $s \to (s^k, as^k+\cdots)$
then the equation of the tangent is $v-au=0$.

For every branch $\lambda$ of the curve $f_x'(x,y)=0$ there exists 
a Newton-Puiseux root $x=\beta(y)$ of $f_x'(x,y)$ and a positive integer $n$
such that $x=\beta(s^n)$, $y=s^n$ is an analytic parametrization 
of $\lambda$ and conversely for every Newton-Puiseux root of $f_x'(x,y)$ there exists a
branch~$\lambda$ with such a parametrization. 

Take $x=\beta(y)$ and the associated branch $\lambda$ Then the image of $\lambda$ by $\Phi$ 
is also the branch given by $u=s^{Mn}$, $v=f(\beta(s^n),s^n)$. 
This branch has the tangent $v-t_0u=0$ if and only if~ 
$f(\beta(s^n),s^n)=t_0s^{Mn}+\cdots\,$. This gives \textbf{(ii)}~$\Leftrightarrow$~\textbf{(iii)}.
\end{proof}

\subsection{Pseudo-balls}
\begin{Definition} 
Let $\alpha$ be a fractional power series and 
let $h$ be a positive rational number or $+\infty$. 
By definition the pseudo-ball $B(\alpha,h)$ is the set 
$$ B(\alpha,h)=\{\,\gamma \mbox{ is a fractional power series }: O(\gamma,\alpha)\geq h \,\} .
$$
We call $h(B):=h$ the height of $B=B(\alpha,h)$.
\end{Definition}

Take a pseudo-ball $B=B(\alpha,h)$ with finite height. 
Let $\lambda_B(y)$ denote $\alpha(y)$ with all terms $y^e$, $e\geq h$ omitted. 
Then every $\gamma\in B$ has a form 
$$ \gamma(y)=\lambda_B(y)+cy^h+\cdots \;.
$$
We call the number $c$ \textit{the leading coefficient of $\gamma$ with respect to~$B$} 
and denote it $\lc_B(\gamma)$.

\subsection{Newton-Puiseux roots of a partial derivative}
Let $f(x,y)$ be a convergent power series such that $\ord f(x,0)=p>1$. 
Then the Newton-Puiseux factorizations of $f$ and $f_x'$ are of the form 
\begin{eqnarray*}
f(x,y)    &=& u(x,y)\prod_{i=1}^p [x-\alpha_i(y)] \\
f_x'(x,y) &=& u'(x,y)\prod_{j=1}^{p-1} [x-\beta_j(y)] 
\end{eqnarray*}
where $u$, $u'$ are units and $\alpha_i$, $\beta_j$ 
are fractional power series. 

\begin{Lemma}\label{L2}
Let $B$ be a pseudoball of finite height. 
Then there exists a complex polynomial $F_B(z)$ and an exponent $q(B)$ 
such that for every $\gamma(y)=\lambda_B(y)+zy^{h(B)}+\cdots$ we have
\begin{equation}\label{Eq4}
f(\gamma(y),y)=F_B(z)y^{q(B)}+\cdots\; .
\end{equation}
Moreover 
\begin{equation}\label{Eq10}
F_B(z)=C\!\!\prod_{i:\alpha_i\in B} (z-\lc_B(\alpha_i))
\end{equation}where $C$ is a nonzero constant.
\end{Lemma}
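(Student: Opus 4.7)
The plan is to substitute $x=\gamma(y)$ directly into the Newton--Puiseux factorization $f(x,y)=u(x,y)\prod_{i=1}^p[x-\alpha_i(y)]$ recalled just before the lemma and to read off the leading term of $f(\gamma(y),y)$ in $y$ by analyzing each linear factor $\gamma(y)-\alpha_i(y)$, splitting the product according to whether $\alpha_i\in B$ or not. Throughout write $h=h(B)$.

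For each $\alpha_i$ I would decompose $\gamma(y)-\alpha_i(y)=(\gamma(y)-\lambda_B(y))-(\alpha_i(y)-\lambda_B(y))$. If $\alpha_i\in B$, then by definition $\alpha_i(y)-\lambda_B(y)=\lc_B(\alpha_i)\,y^h+\cdots$, so
$$\gamma(y)-\alpha_i(y)=(z-\lc_B(\alpha_i))\,y^h+\cdots.$$
If instead $\alpha_i\notin B$, then $e_i:=O(\alpha_i,\alpha)<h$ and $\alpha_i(y)-\lambda_B(y)$ has some nonzero leading coefficient $c_i$ at exponent $e_i$, so
$$\gamma(y)-\alpha_i(y)=-c_i\,y^{e_i}+\cdots,$$
whose initial part is independent of $z$ and of all higher-order terms of $\gamma$.

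Multiplying these factors and using that $u(\gamma(y),y)$ is a unit with value $u(0,0)\neq 0$ at $y=0$, I would read off the exponent
$$q(B)=|\{i:\alpha_i\in B\}|\cdot h+\sum_{\alpha_i\notin B}O(\alpha_i,\alpha)$$
and identify the coefficient of $y^{q(B)}$, viewed as a polynomial in $z$, as
$$F_B(z)=u(0,0)\cdot\Bigl(\prod_{\alpha_i\notin B}(-c_i)\Bigr)\prod_{\alpha_i\in B}(z-\lc_B(\alpha_i)),$$
which has the shape asserted in (\ref{Eq10}) with nonzero constant $C=u(0,0)\prod_{\alpha_i\notin B}(-c_i)$.

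The point to watch most carefully is that this leading coefficient depends only on $z$ and not on the suppressed higher-order data of $\gamma$. For $\alpha_i\notin B$ this is automatic, since the initial part of $\gamma-\alpha_i$ comes from $-(\alpha_i-\lambda_B)$. For $\alpha_i\in B$ with $z\neq\lc_B(\alpha_i)$ the initial part $(z-\lc_B(\alpha_i))y^h$ again involves only $z$. When $z=\lc_B(\alpha_i)$ for some $\alpha_i\in B$, that factor acquires order strictly greater than $h$, hence the full product has order strictly greater than $q(B)$; this cancellation is in perfect agreement with $F_B(z)=0$ at such $z$, so the formula (\ref{Eq4}) remains valid at every value of $z$.
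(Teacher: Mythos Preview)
Your proof is correct and follows essentially the same approach as the paper's: substitute $\gamma$ into the Newton--Puiseux factorization, split the product according to whether $\alpha_i\in B$ or $\alpha_i\notin B$, and read off the leading term of each factor. Your explicit treatment of the degenerate case $z=\lc_B(\alpha_i)$ is in fact more careful than the paper's own computation, which leaves that cancellation tacit.
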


\begin{proof}
An easy computation
\begin{eqnarray*}
\lefteqn{f(\gamma(y),y) = u(\gamma(y),y)\prod_{i=1}^p [\gamma(y)-\alpha_i(y)]  } \\ 
  & & = (u(0,0)+\cdots) \prod_{i:\alpha_i\notin B} [\ini(\lambda_B(y)-\alpha_i(y))+\cdots]
                     \prod_{i:\alpha_i\in B}[(z-\lc_B(\alpha_i))y^{h(B)}+\cdots]   \\
  & & = u(0,0) \prod_{i:\alpha_i\notin B} \ini(\lambda_B(y)-\alpha_i(y))
      \prod_{i:\alpha_i\in B} y^{h(B)}\prod_{i:\alpha_i\in B} (z-\lc_B(\alpha_i)) + \cdots
\end{eqnarray*}
shows that it is enough to take $C$ and $q(B)$ such that 
$$Cy^{q(B)}= u(0,0) \prod_{i:\alpha_i\notin B}\ini(\lambda_B(y)-\alpha_i(y)) 
                      \prod_{i:\alpha_i\in B}y^{h(B)}.$$
\end{proof}

\begin{Lemma}\label{L1}  Let $B=B(\alpha_k,h)$, $1\leq k\leq p$ 
be a pseudo-ball of finite height. Then
$$
F_B'(z) = 
\mbox{const} \prod_{j:\beta_j\in B} (z-\lc_B(\beta_j))  .
$$
\end{Lemma}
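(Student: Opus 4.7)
My plan is to compare two different computations of the initial part of $f_x'(\gamma(y), y)$, for the one-parameter family $\gamma(y) := \lambda_B(y) + z y^{h}$ tracing the pseudo-ball $B$, and to read off the claim by matching leading terms.

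First, I would apply Lemma~\ref{L2} to $f$ to get $f(\gamma(y), y) = F_B(z)\, y^{q(B)} + \cdots$. Since $\partial_z \gamma(y) = y^h$, differentiating both sides with respect to $z$ (which acts coefficient-wise on the formal expansion in $y^{1/N}$ with polynomial-in-$z$ coefficients) and using the chain rule yields
$$y^h \cdot f_x'(\gamma(y), y) = F_B'(z)\, y^{q(B)} + \cdots .$$
Because $B = B(\alpha_k, h)$ contains $\alpha_k$, Lemma~\ref{L2} ensures that $F_B$ has degree at least $1$ in $z$, so $F_B'(z)$ is a nonzero polynomial, and consequently
$$f_x'(\gamma(y), y) = F_B'(z)\, y^{q(B) - h} + \cdots .$$

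Second, I would re-run the computation in the proof of Lemma~\ref{L2} with $f$ replaced by $f_x'$. Since $\ord f_x'(x, 0) = p - 1 \geq 1$, the factorization $f_x'(x,y) = u'(x,y) \prod_{j=1}^{p-1}[x - \beta_j(y)]$ is available, and the argument is formal in this factorization, yielding
$$f_x'(\gamma(y), y) = C'\!\! \prod_{j : \beta_j \in B}(z - \lc_B(\beta_j))\, y^{r(B)} + \cdots $$
for a nonzero constant $C'$ and some exponent $r(B)$, with the empty product understood as $1$ if no $\beta_j$ lies in $B$.

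Finally, matching the two expansions: both leading polynomials in $z$ are nonzero, so the $y$-exponents must agree ($q(B) - h = r(B)$) and the leading polynomials themselves must coincide, giving $F_B'(z) = C' \prod_{j : \beta_j \in B}(z - \lc_B(\beta_j))$, as required. The only delicate point is justifying the term-by-term $z$-differentiation, which amounts to the chain rule in the ring $\bC[z][[y^{1/N}]]$, where $z$-differentiation commutes with the formal $y^{1/N}$-expansion.
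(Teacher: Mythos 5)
Your proof is correct, and it takes a genuinely different route from the paper's. The paper first reduces (via $y\mapsto y^{D}$) to the case where $h$ is an integer and all roots lie in $\bC\{y\}$, then introduces a weighted order $\ord_w$ with $\ord_w x=h$, $\ord_w y=1$, writes down the weighted initial parts of $\tilde f(x,y)=f(x\pm\lambda_B(y),y)$ and of $\tilde f_x'$, and uses the identity $\partial_x(\ini_w\tilde f)=\ini_w(\partial_x\tilde f)$ before setting $y=1$. You instead work directly in $\bC[z][[y^{1/N}]]$: you differentiate the Lemma~\ref{L2} expansion $f(\gamma(y),y)=F_B(z)y^{q(B)}+\cdots$ with respect to the parameter $z$ along the one-parameter family $\gamma=\lambda_B+zy^h$, using the formal chain rule $\partial_z f(\gamma,y)=y^h f_x'(\gamma,y)$, and then match this against a second, Lemma~\ref{L2}-style expansion of $f_x'(\gamma,y)$ obtained from the factorization of $f_x'$. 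Both arguments ultimately rest on the same principle---differentiation in $x$ commutes with taking leading parts---but your version is phrased entirely in the $z$-parameter and sidesteps both the clearing-of-denominators reduction and the weighted-order machinery, so it is the more economical of the two. The matching step is sound: since both $F_B'$ (nonconstant or not, it is nonzero because $\alpha_k\in B$ forces $\deg F_B\geq 1$) and $C'\prod_{j:\beta_j\in B}(z-\lc_B(\beta_j))$ are nonzero polynomials and both expansions hold for every $z$, the $y$-exponents must coincide and the two leading polynomials must be equal. One small remark worth making explicit if you write this up: the factorization $f_x'=u'\prod_j[x-\beta_j]$ used in the second expansion requires $\ord f_x'(x,0)=p-1\geq 1$, which is guaranteed by the standing assumption $\ord f(x,0)=p>1$ in the section containing the lemma.
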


\begin{proof} First consider the special case when 
$h$ is an integer and $\alpha_i$, $\beta_j$ are convergent 
power series.
Let $\tilde f(x,y)=f(x-\lambda_B(y),y)$. 
Since $x\mapsto x-\lambda_B(y)$ is an analytic substitution  
$\frac{\partial}{\partial x}\tilde f(x,y) = 
\frac{\partial f}{\partial x}(x-\lambda_B(y),y)$. 
Hence, 
$\tilde f(x,y)=\tilde u(x,y)\prod_{i=1}^p [x-\tilde\alpha_i(y)]$, 
$\tilde f_x'(x,y)=\tilde u'(x,y)\prod_{j=1}^{p-1}[x-\tilde\beta_j(y)]$, 
where $\tilde u$, $\tilde u'$ are units 
and $\tilde\alpha_i(y)=\alpha_i(y)-\lambda_B(y)$,
$\tilde\beta_j(y)=\beta_j(y)-\lambda_B(y)$ for all $i$, $j$.
Let $\ord_w$ be a weighted order such that $\ord_wx=h$ and $\ord_wy=1$. 
Since the initial part of a product is the product of initial parts,
the weighted initial part of $\tilde f(x,y)$ is the polynomial 
\begin{equation}\label{Eq1}
\ini{}_{\!w}\tilde f(x,y) = C y^k\prod_{i:\ord\tilde\alpha_i\geq h}[x-\lc_B(\alpha_i)y^h]\, ,
\end{equation}
where $k$ is an integer and $C$ is a nonzero constant. Similarly
\begin{equation}\label{Eq2}
\ini{}_{\!w}\tilde f_x'(x,y) = C' y^l\prod_{j:\ord\tilde\beta_j\geq h}[x-\lc_B(\beta_j)y^h]\, .
\end{equation}
One easily checks that 
$\frac{\partial}{\partial x}(\ini_w\tilde f) = 
 \ini_w\frac{\partial\tilde f}{\partial x}$.
Hence, by~(\ref{Eq1}) and (\ref{Eq2}) we get
$$
\frac{\partial}{\partial x} \prod_{i:\ord\tilde\alpha_i\geq h}[x-\lc_B(\alpha_i)y^h] = 
\mbox{\textit{const}}\!\!\!\!\!\prod_{j:\ord\tilde\beta_j\geq h} [x-\lc_B(\beta_j)y^h]
$$ 
Substituting in above equality $y=1$ and using the condition: 
$\gamma\in B \Leftrightarrow \ord (\gamma-\lambda_B)\geq h$
we get the statement of the lemma. 

Assume now that $h$ is an arbitrary positive rational number and $\alpha_i$,
$\beta_j$ are fractional power series. Then one can always find an integer~$D>0$ 
such that $Dh$ is an integer and $\alpha_i(y^D)$, $\beta_j(y^D)$ 
are convergent power series for all $i$, $j$. 
The proof reduces to the special case by taking 
$f(x,y^D)$ and a pseudo-ball $\bar B=B(\alpha_k(y^D),Dh)$ because 
$\gamma(y)\in B\Leftrightarrow \gamma(y^D)\in\bar B$ and 
$\lc_B(\gamma)=\lc_{\bar B}(\gamma(y^D))$ for $\gamma\in B$.
\end{proof}

\begin{Corollary}\label{Wn2}  
For every $\beta_j$ there exist $\alpha_k$, $\alpha_l$ such that 
$$ O(\alpha_k,\beta_j) = O(\alpha_l,\beta_j) = 
   O(\alpha_k,\alpha_l)= \max_{i=1}^p\, O(\alpha_i,\beta_j) .
$$
\end{Corollary}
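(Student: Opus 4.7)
The plan is to fix $\beta_j$, set $h:=\max_{i}O(\alpha_i,\beta_j)$, choose any $\alpha_k$ realising this maximum, and then exhibit the required $\alpha_l$ inside the pseudo-ball $B:=B(\alpha_k,h)$ by a short polynomial argument applied to $F_B$ and $F_B'$ via Lemmas~\ref{L2} and~\ref{L1}.

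First dispose of the degenerate case $h=+\infty$. Then $\alpha_i=\beta_j$ as series for some $i$; expanding
$$f_x'(\alpha_i(y),y)=u(\alpha_i(y),y)\prod_{k\neq i}(\alpha_i(y)-\alpha_k(y)),$$
which must vanish, and using that $u$ is a unit, forces $\alpha_l=\alpha_i$ for some $l\neq i$, and the triple $(\alpha_i,\alpha_l,\beta_j)$ works. Assume henceforth $h<\infty$.

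Since $O(\beta_j,\alpha_k)=h$, we have $\beta_j\in B$ and $\lc_B(\beta_j)\neq\lc_B(\alpha_k)$ (otherwise the contact order would exceed $h$). By Lemma~\ref{L1}, $\lc_B(\beta_j)$ is a root of $F_B'(z)$, so in particular $\deg F_B\geq 2$. The central claim, which I expect to be the main technical hurdle, is that $F_B$ has at least two distinct roots. If instead $F_B(z)=C(z-c)^n$ for some $c\in\bC$, then formula~(\ref{Eq10}) together with $\alpha_k\in B$ forces $c=\lc_B(\alpha_k)$; but then $F_B'(z)=nC(z-c)^{n-1}$ has $c$ as its only root, giving $\lc_B(\beta_j)=c=\lc_B(\alpha_k)$ and contradicting the preceding sentence.

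Therefore some index $l$ satisfies $\alpha_l\in B$ with $\lc_B(\alpha_l)\neq\lc_B(\alpha_k)$, whence $O(\alpha_k,\alpha_l)=h$. Finally, $\lc_B(\alpha_l)=\lc_B(\beta_j)$ would give $O(\alpha_l,\beta_j)>h$, violating maximality, so $O(\alpha_l,\beta_j)=h$ as well, establishing all three required equalities.
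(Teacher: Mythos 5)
Your proof is correct and follows essentially the same route as the paper: fix $\alpha_k$ attaining the maximum, work in the pseudo-ball $B=B(\alpha_k,h)$, use Lemma~\ref{L1} to place $\lc_B(\beta_j)$ among the roots of $F_B'$, use maximality of $h$ to separate $\lc_B(\beta_j)$ from the roots of $F_B$, and conclude that $F_B$ must have two distinct roots. The only cosmetic differences are in packaging: the paper phrases the key step as the elementary fact that a non-constant polynomial with a nonzero critical value has at least two distinct roots (since $F_B(\lc_B(\beta_j))\neq0$ by maximality), whereas you instead suppose $F_B(z)=C(z-c)^n$ and derive $\lc_B(\beta_j)=\lc_B(\alpha_k)$ directly; and for the $h=+\infty$ case the paper simply takes $\alpha_l=\alpha_k$, while your product-rule argument is more work than needed (the statement does not require $l\neq k$) but is nonetheless valid.
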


Corollary~\ref{Wn2} is a part of  Lemma~3.3 in~\cite{Kuo-Lu}. Here we give an independent proof. 

\begin{proof} 
Take $\alpha_k$ such that $O(\alpha_k,\beta_j)=\max_{i=1}^p\, O(\alpha_i,\beta_j)$.
Let $h=O(\alpha_k,\beta_j)$ and $B=B(\alpha_k,h)$. 

Assume first that $h$ is finite.  By Lemma~\ref{L1}  $F_B'(\lc_B(\beta_j))=0$.
Since $h$ is the maximal contact order between Newton-Puiseux roots of $f(x,y)$ and $\beta_j$, we get
$\lc_B(\alpha_i)\neq\lc_B(\beta_j)$ for all $\alpha_i\in B$. It follows that $F_B(\lc_B(\beta_j))\neq0$.
  
Every non-constant complex polynomial which has a nonzero critical value has at least two different roots. 
Thus there exist $\alpha_l\in B$  such that $\lc_B(\alpha_l)\neq\lc_B(\alpha_k)$. 
We get $O(\alpha_k,\beta_j) = O(\alpha_l,\beta_j)=O(\alpha_k,\alpha_l)=h$.

If $h=+\infty$ then it is enough to take $\alpha_l=\alpha_k$.
\end{proof}

\subsection{Tree-model $T(f)$}
Consider the finite set 
${\cal B} := \{\,B(\alpha_i,O(\alpha_i,\alpha_j)): \alpha_i, \alpha_j\in \Zer f\,\}$ of pseudo-balls.
The inclusion relation gives $\cal B$ a structure of a tree
called \textit{the Kuo-Lu tree-model}~$T(f)$ (compare \cite{Kuo-Lu}).
The root of $T(f)$ is the pseudo-ball of the minimal height
which contains all Puiseux roots of $f(x,y)$.
Leaves of $T(f)$ are pseudo-balls
$B(\alpha_i,+\infty)=\{\alpha_i\}$ of infinite heights. 
A path from the root to the leave $\{\alpha_i\}$ connects succesive 
$B\in T(f)$ of increasing heights for which $\alpha_i\in B$.  

As in every finite tree we have a parent-child relation between elements of $T(f)$. 
Let $B'$ be a child of $B\in T(f)$. Since $h(B')>h(B)$, every element of~$B'$ has a form 
$\lambda_B(y)+cy^{h(B)}+\cdots$ with fixed  $c\in\bC$. 
As in  \cite{Kuo-Pa} we say that $B'$ is supported by $c$ and write $B\perp_c B'$.
One easily checks that for every element $c$ of the set $\{\, \lc_B(\alpha_k):\alpha_k\in B\,\}$ 
exists exactly one child of $B$ supported by $c$ of height 
$\min\{O(\alpha_k,\alpha_l): \alpha_k,\alpha_l\in B,\, \lc_B(\alpha_k)=\lc_B(\alpha_l)=c\}$. 

In order to simplify the statement of the next lemma we put  $q(B):=\infty$ if $B\in T(f)$ has infinite height. 

\begin{Lemma}\label{L3}
Let $B\perp_c B'$. Then $q(B)<q(B')$.
\end{Lemma}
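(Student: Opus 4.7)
My approach is to apply Lemma~\ref{L2} twice to a single carefully constructed fractional series $\gamma\in B'$, once at the level of $B$ and once at the level of $B'$, and then compare the two expressions for $\ord f(\gamma(y),y)$. The inequality $q(B)<q(B')$ will drop out because $F_B$ is forced to vanish at $c$ while $F_{B'}$ will not vanish for a generic choice.

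First I would dispose of the trivial case: if $h(B')=+\infty$, then $q(B')=+\infty$ by convention, and the inequality is immediate. So I may assume $h(B')$ is finite. I pick a constant $z'\in\bC$ that is not a root of the polynomial $F_{B'}(z)$ (possible since $F_{B'}$ has only finitely many roots) and set
$$\gamma(y)\;:=\;\lambda_{B'}(y)+z'\,y^{h(B')}.$$
Unwinding the definition of $\lambda_{B'}$ shows that $\gamma\in B'$ and $\lc_{B'}(\gamma)=z'$, so Lemma~\ref{L2} applied at $B'$ gives $f(\gamma(y),y)=F_{B'}(z')\,y^{q(B')}+\cdots$ with nonzero leading coefficient; hence $\ord f(\gamma(y),y)=q(B')$.

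Next I would use this very same $\gamma$ at the level of $B$. Since $B'\subset B$, we have $\gamma\in B$, and since $B\perp_c B'$ every element of $B'$ has $B$-leading coefficient equal to $c$; in particular $\lc_B(\gamma)=c$. Applying Lemma~\ref{L2} at $B$ gives $f(\gamma(y),y)=F_B(c)\,y^{q(B)}+\cdots$. The crucial observation is that $F_B(c)=0$: by construction $B'$ contains at least one Newton--Puiseux root $\alpha_i$ of $f$, and $\lc_B(\alpha_i)=c$, so $(z-c)$ divides $F_B(z)$ by formula~(\ref{Eq10}). Therefore $\ord f(\gamma(y),y)>q(B)$, and comparing the two computed orders yields $q(B)<q(B')$.

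The only part requiring real verification, and the main (mild) obstacle, is the bookkeeping which shows that the constructed $\gamma$ is simultaneously of the form $\lambda_B(y)+cy^{h(B)}+\cdots$ and $\lambda_{B'}(y)+z'y^{h(B')}+\cdots$; this reduces to noting that $\lambda_B$ is a common initial segment of $\lambda_{B'}$ and of every series in $B'$, which is immediate from the definition of $\lambda_{(-)}$ as a truncation.
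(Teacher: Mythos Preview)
Your proof is correct and follows essentially the same approach as the paper: construct $\gamma(y)=\lambda_{B'}(y)+z'y^{h(B')}$ with $F_{B'}(z')\neq0$, apply Lemma~\ref{L2} at both $B$ and $B'$, and use $F_B(c)=0$ (which holds because $c=\lc_B(\alpha_i)$ for some root $\alpha_i\in B'$) to conclude. You simply spell out in more detail what the paper compresses into two lines, including the justification of $F_B(c)=0$ via formula~(\ref{Eq10}) and the disposal of the case $h(B')=+\infty$.
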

 
\begin{proof} 
Assume that $B'$ has a finite height. and take $\gamma(y)=\lambda_{B'}(y)+c_1y^{h(B')}$ such that 
$F_{B'}(c_1)\neq0$.  
By Lemma~\ref{L2}  $f(\gamma(y),y)=F_{B}(c)y^{q(B)}+\cdots=F_{B'}(c_1)y^{q(B')}+\cdots$. 
Since $F_{B}(c)=0$, we get $q(B)<q(B')$. 
%
\end{proof}

\medskip

It follows from Corollary~\ref{Wn2}  that for every $\beta_k\in\Zer f_x'$ there exists 
exactly one $B\in T(f)$ such that $\beta_k\in B$ and  $h(B)=\max_{i=1}^p\, O(\alpha_i,\beta_k)$.
We will say after \cite{Kuo-Lu} that $\beta_k$ leaves the tree $T(f)$ at $B$.
Then $F_B(\lc_B(\beta_k))\neq0$ and by Lemma~\ref{L1} we have $F_B'(\lc_B(\beta_k))=0$.
\medskip

\begin{Example}
\label{Ex1}
Let $f(x,y)=x(x^2-3y^6)(x-4y^2)$. 
The Newton-Puiseux roots of $f(x,y)$ are: $\alpha_1=0$, $\alpha_2=\sqrt{3}y^3$, 
$\alpha_3=-\sqrt{3}y^3$ and $\alpha_4=4y^2$.

\noindent Let us draw the Kuo-Lu tree of $f$.  Following
\cite{Kuo-Lu} we draw pseudo-balls of finite height as horizontal
bars and we do not draw pseudo-balls of infinite height. The 
tree $T(f)$ has two bars (pseudo-balls) of finite hight: $B_1$ of height $2$ and $B_2$ of height~$3$. 

We have
$F_{B_1}(z)=\prod_{i=1}^4 (z-\lc_{B_1}(\alpha_i))=z^3(z-4)$. 
Since $F_{B_1}'(z)=4z^2(z-3)$ there is only one  
root $\beta_1=3y^2+\cdots$ of $f_x'(x,y)$ which leaves $T(f)$ at $B_1$. 
 
We have
$F_{B_2}(z)=-4\prod_{i=1}^3 (z-\lc_{B_2}(\alpha_i))=-4z(z^2-3)$. 
Since $F_{B_2}'(z)=-12(z-1)(z+1)$ there are two  
roots $\beta_2=y^3+\cdots$ and  $\beta_3=-y^3+\cdots$ of $f_x'(x,y)$ which leave $T(f)$ at $B_2$.

$$
\begin{picture}(90,60)(0,0)
{\thicklines \put(13,15){\line(1,0){67}}}
{\thicklines \put(0,35){\line(1,0){40}}}
\put(13,15){\line(0,1){20}}  
\put(0,35){\line(0,1){20}} 
\put(20,35){\line(0,1){20}}
\put(40,35){\line(0,1){20}} 
\put(0,55){$\alpha_1$} 
\put(20,55){$\alpha_2$}
\put(40,55){$\alpha_3$}
\put(80,15){\line(0,1){40}} 
\put(47,0){\line(0,1){15}}  
\put(43,33){$B_2$}
\put(83,13){$B_1$}
\put(80,55){$\alpha_4$} 
\end{picture}
$$
\end{Example}

\begin{Theorem} \label{T3}
Let $f(x,y)$ be a convergent power series  
such that $0<\ord f(x,0)<\infty$  and let $M$ be a positive integer. 
Then $t_0\neq 0$ is a special value of the pencil $f(x,y)-ty^M=0$
if and only if there exists $B\in T(f)$ of finite height for which
$q(B)=M$ and $t_0$ is a critical value of the polynomial $F_B$.
\end{Theorem}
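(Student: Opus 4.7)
The plan is to use Theorem~\ref{T2}(iii) to translate ``$t_0\neq 0$ is a special value'' into the existence of a Newton--Puiseux root $\beta$ of $f_x'$ with $f(\beta(y),y)=t_0y^M+\cdots$, and then to interpret this condition on the tree via Lemmas~\ref{L1} and~\ref{L2}. The key remark to exploit is that Lemma~\ref{L2} is universal in $\gamma$: it applies to any fractional power series of the form $\lambda_B(y)+zy^{h(B)}+\cdots$, whether or not $\gamma$ is a Newton--Puiseux root of $f$. In particular, I can feed a Newton--Puiseux root of $f_x'$ into it.

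For the forward direction I would start from such a $\beta$. Since $t_0\neq 0$, the series $f(\beta(y),y)$ does not vanish identically, so $\beta$ is not a Newton--Puiseux root of $f$; the remark following Lemma~\ref{L3} then supplies a unique $B\in T(f)$ of finite height at which $\beta$ leaves $T(f)$, and by that same remark $F_B(\lc_B(\beta))\neq 0$ and $F_B'(\lc_B(\beta))=0$. Applying Lemma~\ref{L2} with $\gamma=\beta$ yields
$$ f(\beta(y),y)=F_B(\lc_B(\beta))\,y^{q(B)}+\cdots , $$
and comparing with $t_0y^M+\cdots$ (whose leading coefficient $t_0$ is nonzero) forces both $q(B)=M$ and $t_0=F_B(\lc_B(\beta))$. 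Thus $t_0$ is a critical value of $F_B$.

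For the reverse direction I would reverse this chain. Suppose $B\in T(f)$ has finite height, $q(B)=M$, and $t_0=F_B(c)$ for some $c$ with $F_B'(c)=0$. By Lemma~\ref{L1}, every root of $F_B'$ is of the form $\lc_B(\beta_j)$ for some $\beta_j\in\Zer f_x'$ with $\beta_j\in B$, so I pick such a $\beta_j$. Now Lemma~\ref{L2} applied directly to $\gamma=\beta_j$ gives
$$ f(\beta_j(y),y)=F_B(c)\,y^{q(B)}+\cdots=t_0y^M+\cdots , $$
and Theorem~\ref{T2}(iii) concludes that $t_0$ is a special value of the pencil.

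The only delicate point is the forward direction, where I must guarantee that the $y^{q(B)}$ leading term in Lemma~\ref{L2} is genuine, i.e.\ that $F_B(\lc_B(\beta))\neq 0$, since otherwise the identification $q(B)=M$ could fail and $\beta$ would ``continue'' into a child pseudo-ball by Lemma~\ref{L3}. This is precisely the content of ``leaves the tree at $B$'' as established via Corollary~\ref{Wn2}. The reverse direction, by contrast, is a mechanical substitution into Lemma~\ref{L2} with no analogous concern, because once $F_B'(c)=0$ is given, Lemma~\ref{L1} hands us the required Newton--Puiseux root of $f_x'$ inside $B$ for free.
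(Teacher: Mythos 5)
Your proof is correct and follows essentially the same route as the paper: use Theorem~\ref{T2}(iii) to reduce to the existence of a Newton--Puiseux root $\beta$ of $f_x'$ with $f(\beta(y),y)=t_0y^M+\cdots$, then in the forward direction locate the pseudo-ball $B$ where $\beta$ leaves $T(f)$ and apply Lemma~\ref{L2}, and in the reverse direction use Lemma~\ref{L1} to produce the required $\beta$ and feed it back into Lemma~\ref{L2} and Theorem~\ref{T2}. Your remark that $t_0\neq0$ ensures $\beta\notin\Zer f$ (so that $\beta$ genuinely leaves the tree at a finite-height pseudo-ball) is a small but worthwhile clarification that the paper leaves implicit.
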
                

\begin{proof} 
Assume that $t_0\neq0$ is a special value of the pencil $f(x,y)-ty^M=0$. 
Then by Theorem~\ref{T2} there exists $\beta\in\Zer f_x'$ such that 
$f(\beta(y),y)=t_0y^M+\cdots\,$.  
Let $B\in T(f)$ be the bar at which $\beta$ leaves the tree $T(f)$. 
Then $F_B'(\lc_B(\beta))=0$ and $F_B(\lc_B(\beta))\neq0$.
By Lemma~\ref{L2} we have $f(\beta(y),y)=F_B(\lc_B(\beta))y^{q(B)}+\cdots\,$. 
Hence $q(B)=M$ and $t_0=F_B(\lc_B(\beta))$ is a critical value of $F_B$. 

Conversly, if there exists $B\in T(f)$ with $q(B)=M$ such that 
$t_0$ is a critical value of the polynomial $F_B$ 
then by Lemma~\ref{L1} there exists 
$\beta\in\Zer f_x'\cap B$ such that $t_0=F_B(\lc_B(\beta))$. 
We get $f(\beta(y),y)=F_B(\lc_B(\beta))y^{q(B)}+\cdots\,$. 
Hence by Theorem~\ref{T2}, $t_0$ is a special value of the pencil $f(x,y)-ty^M=0$.
\end{proof}

\subsection{Action of roots of unity on fractional power series}
Let $D$ be a positive integer such that all Newton-Puiseux roots of $f(x,y)$
and $f_x'(x,y)$ can be written in the form 
\begin{equation}\label{Eq:ar}
 \phi(y)=a_1y^{n_1/D}+a_2y^{n_2/D}+\cdots, \qquad
 0\leq n_1<n_2<\dots\; .
\end{equation}

Let $\theta$ be any $D$-th complex root of unity, $\theta^D=1$. 
Each $\theta$ yields a transformation (conjugation) 
on series of form~(\ref{Eq:ar})
$$ \theta(\phi)(y)=a_1\theta^{n_1}y^{n_1/D}+a_2\theta^{n_2}y^{n_2/D}+\cdots\;.
$$
One easily verifies that $\theta$ acts trivially on the subring 
$\bC\{y\}\subset\bC\{y^{1/D}\}$
and that the action of~$\theta$ preserves a contact order, that is
$O(\phi_1,\phi_2)=O(\theta(\phi_1),\theta(\phi_2))$. 

The conjugate action permutes transitively the Newton-Puiseux roots of any irreducible 
$p(x,y)\in\bC\{x,y\}$ provided that $\Zer p\subset\bC\{y^{1/D}\}$ (see \cite{Walker}, page~107)

\begin{Definition}(\cite{Kuo-Pa}, Definition~6.1)
Take $B,\bar B\in T(f)$. We say $B$ is conjugate to $\bar B$ 
writing it as $B\sim\bar B$, if, and only if $h(B)=h(\bar B)$ 
and there exists an irreducible $p(x,y)\in\bC\{x,y\}$ 
of which one (Newton-Puiseux) root belongs to $B$ and one belongs to $\bar B$.
\end{Definition}

\begin{Lemma}\label{L:KP}(\cite{Kuo-Pa}, Lemma~6.2)
Suppose $B\sim\bar B$. Take any irreducible $q(x,y)\in\bC\{x,y\}$. If $q(x,y)$ 
has one root which belongs to $B$ then it also has a root which belongs to $\bar B$.
\end{Lemma}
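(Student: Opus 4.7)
The plan is to use the conjugate action of $D$-th roots of unity directly. First I would pick an integer $D>0$ large enough that every Newton--Puiseux root of each of $p$, $q$ and $f$ lies in $\bC\{y^{1/D}\}$; such a $D$ exists because only finitely many series are involved. Then every $D$-th root of unity $\theta$ acts by conjugation on all the fractional power series in sight, fixing $\bC\{y\}$ and preserving contact orders, exactly as recalled in the subsection preceding the statement.

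Next I would invoke the transitivity of the conjugate action on the Newton--Puiseux roots of the irreducible series $p$. By hypothesis $B\sim\bar B$ gives roots $\alpha\in\Zer p\cap B$ and $\bar\alpha\in\Zer p\cap\bar B$, so there is some $\theta$ with $\theta(\alpha)=\bar\alpha$. Because $\theta$ preserves contact orders, $O(\theta(\gamma),\theta(\alpha))=O(\gamma,\alpha)$ for every fractional power series $\gamma$, and this immediately translates into $\theta(B(\alpha,h))=B(\bar\alpha,h)$ for $h=h(B)=h(\bar B)$. Since $\bar\alpha\in\bar B$ and $h(\bar B)=h$, the pseudo-ball $\bar B$ equals $B(\bar\alpha,h)$, so $\theta$ maps $B$ bijectively onto $\bar B$ as sets of fractional power series.

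Finally, suppose $\gamma\in\Zer q\cap B$. Writing $q(x,y)$ as a polynomial in $x$ whose coefficients lie in $\bC\{y\}$ and applying $\theta$ to the identity $q(\gamma(y),y)=0$ (using that $\theta$ fixes every such coefficient), I obtain $q(\theta(\gamma)(y),y)=0$, so $\theta(\gamma)\in\Zer q$. Combined with $\theta(\gamma)\in\theta(B)=\bar B$, this produces the root of $q$ in $\bar B$ demanded by the lemma.

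The only point that requires any care is the very first one, namely choosing $D$ large enough to simultaneously accommodate the roots of $f$, $p$ and $q$, so that a single Galois group acts on all the relevant series at once; after that, each of the subsequent steps is a direct application of the listed properties of the conjugate action.
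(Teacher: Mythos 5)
The paper itself does not prove this lemma; it simply cites Kuo--Parusi\'nski, Lemma~6.2, so there is no in-paper proof to compare against. Your argument is the natural Galois-action proof and it is correct: choosing $D$ large enough so that $\theta$ acts on the roots of $f$, $p$ and $q$ simultaneously, using transitivity of the action on $\Zer p$ to find $\theta$ with $\theta(\alpha)=\bar\alpha$, observing that $\bar B=B(\bar\alpha,h)$ (which holds by the ultrametric property of $O$), and then concluding from order-preservation that $\theta(\gamma)\in\bar B$ while $\theta(\gamma)\in\Zer q$ because $\theta$ is a ring homomorphism fixing $\bC\{y\}$. One small imprecision: $\theta$ is defined only on $\bC\{y^{1/D}\}$, so the assertion that ``$\theta$ maps $B$ bijectively onto $\bar B$ as sets of fractional power series'' should be read as $\theta\bigl(B\cap\bC\{y^{1/D}\}\bigr)=\bar B\cap\bC\{y^{1/D}\}$; this is exactly what the last step needs, so the conclusion stands. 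This is almost certainly the same argument as in the cited source, so I would not call it a genuinely different route.
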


It follows from Lemma~\ref{L:KP} that $\sim$ is an equivalence relation. One can 
use any irreducible component of $f(x,y)$ as $q(x,y)$ to identify an equivalence class
of any pseudo-ball of $T(f)$ at any given height. 

Take $B,\bar B\in T(f)$ such that $B\sim\bar B$. By Lemma~\ref{L:KP} there exists $\theta$, 
$\theta^D=1$ such that $\theta(\lambda_B)=\lambda_{\bar B}$. 
If $h(B)=\frac{n}{D}$, $q(B)=\frac{m}{D}$ then taking the conjugate of the
equation
$$ f(\lambda_B(y)+cy^{h(B)},y)=F_B(c)y^{q(B)}+\cdots
$$ 
by $\theta$ we get 
$$
f(\lambda_{\bar B}(y)+\theta^{n}cy^{h(\bar B)},y)=\theta^{m}F_B(c)y^{q(B)}+\cdots .
$$
It follows from~Lemma~\ref{L2} that 
\begin{equation}\label{q(B)}
q(\bar B)=q(B) \quad\mbox{for $\bar B\sim B$}
\end{equation}
and also follows that $F_{\bar B}(\theta^{n}z)=\theta^{m}F_B(z)$. 
In particular 
\begin{equation}\label{Eq:F}
F_{\bar B}(\theta^{n}z)=F_B(z) \quad\mbox{if $\bar B\sim B$ and $q(B)$ is an integer.}
\end{equation}

\begin{Definition}
The index of a fractional power series $\phi(y)$ is by definition the minimal positive integer $N$ such that
$\phi(y)\in\bC\{y^{1/N}\}$.
\end{Definition}

\begin{Theorem}\label{Th:4}
Take $B\in T(f)$ such that $q(B)$ is an integer. 
Assume that $\lambda_B(y)$ is a series of index~$N$
and that $h(B)=\frac{m}{Nn}$ where ${\rm gcd} (m,n)=1$. Then
there exists a polynomial $G_B$ for which $F_B(z)=G_B(z^n)$.
\end{Theorem}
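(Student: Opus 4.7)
The plan is to produce a root of unity $\zeta$ of exact order $n$ such that $F_B(\zeta z)=F_B(z)$; invariance under such a $\zeta$ forces $F_B(z)=G_B(z^n)$ by a coefficient-by-coefficient comparison, so the theorem reduces to finding a conjugation that fixes $\lambda_B$ while formula~(\ref{Eq:F}) rescales $z$ by a primitive $n$-th root of unity.

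First I would enlarge the common denominator $D$, if necessary, so that $Nn\mid D$; this is harmless since replacing $D$ by any multiple yields a compatible conjugate action on a larger fractional-power-series ring. Writing $D=NnK$, I would then identify the stabilizer of $\lambda_B$ inside the group of $D$-th roots of unity. Because the index of $\lambda_B$ equals exactly $N$, its nonzero terms have exponents of the form $n_j/N$ with $\gcd(\{n_j\}\cup\{N\})=1$; a short arithmetic check then shows that $\theta=\exp(2\pi i s/D)$ fixes $\lambda_B$ if and only if $N\mid s$, so the stabilizer is precisely the group of $(D/N)$-th, equivalently $nK$-th, roots of unity. This stabilizer computation is the main obstacle: the minimality built into the definition of the index is exactly what keeps the stabilizer from being strictly larger, and without it the primitive-root conclusion below could fail.

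With the stabilizer in hand, I would take $\theta:=\exp(2\pi i/(nK))$ and apply~(\ref{Eq:F}) with $\bar B=B$. Since $q(B)$ is an integer and the numerator of $h(B)$ over $D$ equals $Km$, the formula yields $F_B(\theta^{Km}z)=F_B(z)$. The crucial observation is that $\theta^{Km}=\exp(2\pi i m/n)$ is a \emph{primitive} $n$-th root of unity, which is precisely where the coprimality hypothesis $\gcd(m,n)=1$ enters. A direct coefficient comparison in $F_B(\zeta z)=F_B(z)$ then forces the coefficient $c_k$ of $z^k$ in $F_B$ to vanish whenever $n\nmid k$, so the polynomial $G_B(w):=\sum_{j\ge 0}c_{nj}w^j$ satisfies $F_B(z)=G_B(z^n)$, completing the proof.
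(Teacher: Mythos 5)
Your proof is correct and follows the same route as the paper: find a conjugation $\theta$ that fixes $\lambda_B$ while rescaling the coefficient of $y^{h(B)}$ by a primitive $n$-th root of unity, deduce $F_B(\epsilon z)=F_B(z)$, and finish by comparing coefficients. The only difference is expository: you make explicit the stabilizer computation (that $\theta=\exp(2\pi i s/D)$ fixes $\lambda_B$ iff $N\mid s$, using the minimality of the index) which the paper compresses into the bare assertion that $\lambda_B(y)+zy^{h(B)}$ and $\lambda_B(y)+\epsilon zy^{h(B)}$ are conjugate, and you invoke the already-established identity~(\ref{Eq:F}) rather than re-deriving it as the paper does with~(\ref{Eq:7})--(\ref{Eq:9}).
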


\begin{proof}
Let $\epsilon$ be a primitive $n$-th root of unity. 
We shall show that for every complex number~$z$ one has $F_B(z)=F_B(\epsilon z)$.

Since series $\lambda_B(y)+zy^{h(B)}$, $\lambda_B(y)+\epsilon zy^{h(B)}$ 
are conjugate, there exists $\theta$, $\theta^D=1$ such that 
$\theta(\lambda_B(y)+zy^{h(B)})=\lambda_B(y)+\epsilon zy^{h(B)}$.

By Lemma \ref{L2} 
\begin{equation}\label{Eq:7}
   f(\lambda_B(y)+zy^{h(B)},y)= F_B(z)y^{q(B)}+\cdots
\end{equation}
and 
\begin{equation}\label{Eq:8}
   f(\lambda_B(y)+\epsilon zy^{h(B)},y)= F_B(\epsilon z)y^{q(B)}+\cdots\; .
\end{equation}
The conjugate of~(\ref{Eq:7}) by $\theta$ is \begin{equation}\label{Eq:9}
 f(\lambda_B(y)+\epsilon zy^{h(B)},y)= F_B(z)y^{q(B)}+\cdots
\end{equation}

By (\ref{Eq:8}) and (\ref{Eq:9}) we get  $F_B(z)=F_B(\epsilon z)$.
 
Let $Az^k$ be any nonzero monomial which appears in $F_B(z)$. 
By equality $F_B(z)=F_B(\epsilon z)$
we get $Az^k=A\epsilon^kz^k$ which gives $\epsilon^k=1$. 
It follows that $k$ is the multiple of $n$ and as a consequence
there exists the polynomial $G_B$ such that $F_B(z)=G_B(z^n)$.
\end{proof}

\begin{Lemma}\label{L:sim} 
Keep the assumtions and notations of Theorem~\ref{Th:4} and let 
$B\perp_{c_1} B_1$, $B\perp_{c_2} B_2$. Then $B_1\sim B_2$ if and 
only if $c_1^n=c_2^n$.
\end{Lemma}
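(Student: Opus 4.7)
My plan is to translate $B_1\sim B_2$ into the existence of a Galois conjugation $\theta$ of fractional power series sending an element of $B_1$ to an element of $B_2$, and then to read off the relation between $c_1$ and $c_2$ from how $\theta$ acts on the two distinguished pieces $\lambda_B(y)$ and $c_i\,y^{h(B)}$. Choose a common denominator $D=Nnk$ so that every Newton--Puiseux root of $f$ belongs to $\bC\{y^{1/D}\}$ (enlarging $D$ later if other irreducibles enter the picture). The index hypothesis lets us write $\lambda_B(y)=\sum a_i y^{n_i/N}$ with $\gcd(n_i)=1$, and the key computation is that a $D$-th root of unity $\theta$, acting by $y^{j/D}\mapsto\theta^{j}y^{j/D}$, fixes $\lambda_B$ exactly when $\theta^{nk}=1$, and always sends $c\,y^{h(B)}$ to $\theta^{mk}c\,y^{h(B)}$ (because $h(B)=mk/D$). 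Thus whenever $\theta^{nk}=1$ one has $(\theta^{mk})^n=(\theta^{nk})^m=1$.

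For the forward direction, let $p(x,y)$ be an irreducible witnessing $B_1\sim B_2$, with roots $\alpha\in B_1$ and $\alpha'\in B_2$; transitivity of the Galois action on $\Zer p$ yields $\theta$ with $\theta(\alpha)=\alpha'$. Both $\alpha$ and $\alpha'$ lie in $B$, so their terms of order less than $h(B)$ sum to $\lambda_B$, which forces $\theta(\lambda_B)=\lambda_B$ (hence $\theta^{nk}=1$); comparing the coefficient of $y^{h(B)}$ yields $c_2=\theta^{mk}c_1$. Raising to the $n$-th power and invoking the last identity of the setup paragraph gives $c_2^n=c_1^n$.

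For the converse, if $c_1=0$ then $c_2=0$ and $B_1=B_2$ is immediate, so assume $c_1\neq 0$ and set $\zeta:=c_2/c_1$, a fixed $n$-th root of unity. Since $\gcd(m,n)=1$, the $mk$-th power map on $\mu_{nk}$ has image of order $nk/\gcd(nk,mk)=n/\gcd(n,m)=n$, i.e.\ equals $\mu_n$, so I can pick $\theta\in\mu_{nk}\subset\mu_D$ with $\theta^{mk}=\zeta$. Take any $\alpha\in B_1\cap\Zer f$ (which exists because $B_1$ is a node of $T(f)$). Then $\theta(\alpha)$ is a root of $f$ whose initial segment is $\lambda_B+c_2y^{h(B)}+\cdots$, so it lies in the unique child of $B$ supported by $c_2$, namely $B_2$. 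Because $\theta$ preserves contact orders and bijects $\Zer f\cap B_1$ onto $\Zer f\cap B_2$, the minimum pairwise contact orders agree and $h(B_1)=h(B_2)$. The roots $\alpha,\theta(\alpha)$ of $f$ lie in a common irreducible factor, witnessing $B_1\sim B_2$.

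The main obstacle is the bookkeeping interlocking $N$, $m$, and $n$: one must verify that a single $\theta$ can simultaneously fix $\lambda_B$ and transform $c_1$ into $c_2$, which reduces to the surjectivity of the $mk$-th power map $\mu_{nk}\to\mu_n$, i.e.\ precisely to the hypothesis $\gcd(m,n)=1$. Once this is in hand, both implications become two-line computations.
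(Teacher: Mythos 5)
Your proof is correct and follows essentially the same route as the paper: translate $B_1\sim B_2$ into a Galois conjugation $\theta$ that fixes $\lambda_B$, and read off an $n$-th-root-of-unity relation between $c_1$ and $c_2$ from the action of $\theta$ on $y^{h(B)}$. One small imprecision worth noting: index $N$ does \emph{not} give $\gcd(n_i)=1$ but only $\gcd(\{n_i\}\cup\{N\})=1$; writing $g=\gcd(n_i)$, the stabilizer condition $\theta^{gnk}=1$ for $\theta\in\mu_{Nnk}$ still yields $\theta\in\mu_{nk\,\gcd(N,g)}=\mu_{nk}$ because $\gcd(N,g)=1$, so your key claim that $\theta$ fixes $\lambda_B$ iff $\theta^{nk}=1$ survives. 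The only real structural difference is that the paper's forward direction applies Lemma~\ref{L:KP} to the irreducible factor of $f$ through a root of $f$ lying in $B_1$, thereby keeping all conjugations inside $\mu_D$ from the outset, whereas you conjugate a root of the external witness $p$, which (as you note) may require enlarging $D$; both routes land on the same computation.
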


\begin{proof} 
 Take a Newton-Puiseux root 
$\alpha(y)=\lambda_B(y)+c_1y^{h(B)}+\cdots$ of $f$.

Assume that $B_1\sim B_2$.
All conjugates to $\alpha$ which belong to $B$ are of the form 
$\lambda_B(y)+\epsilon c_1y^{h(B)}+\cdots$, where $\epsilon^n=1$. 
By Lemma~\ref{L:KP} one of conjugates to $\alpha$ belongs to $B_2$. 
Hence $c_2=\epsilon c_1$ which gives $c_1^n=c_2^n$. 

Conversly, if $\alpha(y)=\lambda_B(y)+c_1y^{h(B)}+\cdots$
and $c_1^n=c_2^n$ then $c_2=\epsilon c_1$ for some  $n$-th root of unity $\epsilon$. 
Hence there exists a conjugate to $\alpha$ of the form  
$\lambda_B(y)+c_2y^{h(B)}+\cdots$\,. This gives $B_1\sim B_2$.
\end{proof}

\medskip
Denote $[B]$ the equivalence class of $B\in T(f)$ with respect to the conjugate relation.
Let $E(f)$ be the set of all equivalence classes. 
Then $E(f)$ has also a structure of a tree. 
Following \cite{Eggers}, \cite{GB}, \cite{Wall} we call $E(f)$ the Eggers tree of~$f$. 
Irreducible (possibly multiple) factors of $f$ are in one-to-one correspondence with 
the leaves of $E(f)$. 

Denote $t([B])$ the number of children of $[B]$ in the 
Eggers tree $E(f)$.

\begin{Lemma}\label{L:E} 
Keep the assumtions and notations of Theorem~\ref{Th:4} and let 
$G_B(z)=\mbox{\textit{const}}\prod_{i=1}^k(z-b_i)^{m_i}$ with 
$b_i\neq b_j$ for $i\neq j$.  Then $t([B])=k$.
\end{Lemma}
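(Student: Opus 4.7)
The plan is to compute $t([B])$ in two steps: first list the children of $B$ in the Kuo--Lu tree $T(f)$, then group them into $\sim$-classes via Lemma~\ref{L:sim} and count.

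First I would recall (from the discussion preceding Lemma~\ref{L3}) that the children of $B$ in $T(f)$ are in bijection with the set of supporting constants $\{\lc_B(\alpha_i) : \alpha_i \in B\cap \Zer f\}$, each $c$ giving a unique child $B'$ with $B\perp_c B'$. By the factorization formula~(\ref{Eq10}) from Lemma~\ref{L2}, this set is exactly the set of distinct roots of the polynomial $F_B$. Call it $R$.

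Next I would apply Lemma~\ref{L:sim}: two children $B_1,B_2$ of $B$ with $B\perp_{c_1}B_1$ and $B\perp_{c_2}B_2$ are conjugate, $B_1\sim B_2$, exactly when $c_1^n=c_2^n$. Therefore the number of $\sim$-classes among children of $B$ equals the cardinality of the image $\{c^n : c\in R\}$. Now the assumption $F_B(z)=G_B(z^n)$ from Theorem~\ref{Th:4} says that the roots of $F_B$ are precisely the $n$-th roots of the roots of $G_B$, so
\[
\{c^n : c\in R\} \;=\; \{b_1,\ldots,b_k\},
\]
a set of cardinality $k$ since the $b_i$ are pairwise distinct.

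Finally I would identify the children of $[B]$ in the Eggers tree $E(f)$ with the $\sim$-classes of children of $B$ in $T(f)$: the conjugate action by $D$-th roots of unity preserves heights and the parent-child relation (it is a height-preserving tree automorphism of $T(f)$), so the children of any representative $\bar B\in [B]$ produce the same $\sim$-classes as the children of $B$ itself. Combined with the count above, this gives $t([B])=k$. The main obstacle I expect is this last bookkeeping step --- making precise that children of $[B]$ in $E(f)$ are exactly the $\sim$-classes of children of a single representative in $T(f)$, with no possible merging across different representatives. Once that definitional issue is dispatched, everything else reduces to the clean algebraic identity $\#\{c^n : F_B(c)=0\}=\deg G_B$ provided by Theorem~\ref{Th:4} and Lemma~\ref{L:sim}.
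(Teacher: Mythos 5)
Your proposal is correct and takes essentially the same approach as the paper, just in more detail: the paper's proof is a single sentence that compresses your steps 1--4 into "by Lemma~\ref{L:sim} the children of $[B]$ in $E(f)$ are in one-to-one correspondence with $\{(\lc_B(\alpha_i))^n : \alpha_i \in B\}$, which equals $\{b_1,\dots,b_k\}$ by $F_B(z)=G_B(z^n)$." In particular the final bookkeeping step you flag (identifying children of $[B]$ in $E(f)$ with $\sim$-classes of children of a single representative $B$) is implicitly assumed in the paper and your justification of it via the height-preserving tree-automorphism property of the conjugate action is exactly the right one.
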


\begin{proof} 
By Lemma~\ref{L:sim} the children of $[B]$ in the Eggers tree $E(f)$ are 
in one-to-one correspondence with the elements of the set 
$\{\, (\lc_B(\alpha_i))^n : \alpha_i\in B\,\}$ which is by the equality $F_B(z)=G_B(z^n)$ 
the set $\{\,b_1,\dots, b_k\,\}$. 
\end{proof}

\begin{Corollary}\label{Wn:E}  
Take $B\in T(f)$ such that $q(B)$ is an integer.  
Then the polynomial~$F_B$  has at most $t([B])$ non-zero critical values. 
\end{Corollary}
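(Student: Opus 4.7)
The plan is to combine Theorem~\ref{Th:4} with Lemma~\ref{L:E} to reduce the counting of non-zero critical values of $F_B$ to an elementary combinatorial computation on $G_B$.

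By Theorem~\ref{Th:4} we have $F_B(z)=G_B(z^n)$, so
\[
F_B'(z)=n\,z^{n-1}\,G_B'(z^n).
\]
Every critical value of $F_B$ therefore has the form $G_B(w)$ with $w=z^n$ for some critical point $z$ of $F_B$. The possible values of $w$ split into two cases: $w=0$ (arising from $z=0$, which is automatically a critical point of $F_B$ whenever $n\geq2$), and $w\neq0$ with $G_B'(w)=0$ (arising from $z\neq0$ with $G_B'(z^n)=0$).

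Next I would write $G_B(z)=c\prod_{i=1}^{k}(z-b_i)^{m_i}$ with pairwise distinct $b_i$, so that $k=t([B])$ by Lemma~\ref{L:E}. The derivative $G_B'$ has degree $\sum m_i-1$; the direct factorization $G_B'(z)=\bigl[\prod_i(z-b_i)^{m_i-1}\bigr]H(z)$ with $\deg H=k-1$ shows that among the roots of $G_B'$, exactly $\sum m_i-k$ (with multiplicity) sit at the $b_i$ and yield the critical value $0$, while the remaining $k-1$ roots are those of $H$, and one checks that $H(b_\ell)=m_\ell\prod_{j\neq\ell}(b_\ell-b_j)\neq0$, so they lie outside $\{b_1,\ldots,b_k\}$. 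At such ``extra'' roots $w$ one has $G_B(w)\neq0$, so $G_B$ has at most $k-1$ distinct non-zero critical values.

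Combining the two observations, the non-zero critical values of $F_B$ lie in $\{G_B(0)\}\cup S$, where $S$ is a set of at most $k-1$ non-zero complex numbers, and the first summand enters only when $n\geq2$. If $G_B(0)=0$ or $n=1$, the $z=0$ contribution drops out and one obtains at most $k-1<k$ values; otherwise $G_B(0)$ contributes at most one new non-zero value, for a total of at most $k=t([B])$. The only step requiring genuine care is this final bookkeeping, and the essential algebraic input — that the ``extra'' critical points of $G_B$ avoid the roots of $G_B$ — is precisely the factorization above.
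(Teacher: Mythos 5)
Your proof is correct and takes essentially the same route as the paper: apply Theorem~\ref{Th:4} and Lemma~\ref{L:E} to write $F_B(z)=G_B(z^n)$ with $G_B$ having $t([B])$ distinct roots, compute $F_B'(z)=nz^{n-1}G_B'(z^n)$, and split the critical values into $G_B(0)$ (when $n>1$) plus the critical values of $G_B$. The paper leaves implicit the count of at most $t([B])-1$ non-zero critical values for a polynomial with $t([B])$ distinct roots; you usefully spell that out via the factorization $G_B'=\bigl[\prod_i(z-b_i)^{m_i-1}\bigr]H$ with $\deg H=t([B])-1$ and the check that $H$ does not vanish at the $b_i$.
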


\begin{proof}
By Theorem~\ref{Th:4} and Lemma~\ref{L:E}
$F_B(z)=G_B(z^n)$, 
where $G_B(z)=\mbox{\textit{const}}\prod_{i=1}^{t([B])}(z-b_i)^{m_i}$ with 
$b_i\neq b_j$ for $i\neq j$. 
Computing the derivative $F_B'(z)=nz^{n-1}G_B'(z^n)$ we see that 
every critical value of $F_B$ is a critical value of the polynomial $G_B$ or 
the number $F_B(0)$ (if $n>1$).  Hence, the polynomial $F_B$ has 
at most $t([B])$ non-zero critical values. 
\end{proof}

\subsection{Proof of Theorem~\ref{Th:main}}
If $\Phi: (\bC^2, 0)\to(\bC^2, 0)$ is a local analytic diffeomorphism 
and $f_1=f\circ\Phi$, $l_1=l\circ\Phi$, then the pencils 
$f(x,y)-tl(x,y)^M=0$, $f_1(x,y)-tl_1(x,y)^M=0$
have the same special values.
Hence, without loss of generality we may assume that $l(x,y)=y$ (take such a  $\Phi$ 
that $l\circ\Phi=y$). 

It follows from~(\ref{q(B)}) that for every $[B]\in E(f)$ the number
$q([B]):=q(B)$ for $B\in [B]$ is well defined.  Consider the set 
$E_M=\{\,[B]\in E(f): q([B])=M\,\}$. 

\medskip
\textbf{Claim~1.}  
The number of non-zero special values of the pencil $f(x,y)-ty^M=0$ 
 is bounded from above by $\sum_{[B]\in E_M} t([B])$.

\smallskip
\textit{Proof.}
By Theorem~\ref{T3} the set of non-zero
special values of the pencil $f(x,y)-ty^M=0$ is the union of the sets 
of non-zero critical values of polynomials $F_B$, where $B\in T(f)$, $q(B)=M$.  

If $B\sim \bar B$ and $q(B)=M$  then by~(\ref{Eq:F})
the polynomials $F_B$ and $F_{\bar B}$ 
have the same sets of critical values. 
It follows from the  above and from Corollary~\ref{Wn:E} that for every $[B]\in E_M$ 
the pseudo-balls $B\in[B]$ yield at most $t([B])$ non-zero special values of the pencil $f(x,y)-ty^M=0$. 
Since every $B\in T(f)$ for which $q(B)=M$ belongs to some $[B]\in E_M$ the Claim follows. 

\medskip
\textbf{Claim~2.}  
$$ \sum_{[B]\in E_M} t([B]) \leq \mbox{number of leaves of $E(f)$}\;.
$$

\textit{Proof.}
Let $E'(f)$ be the sub-tree of $E(f)$ consisting of: elements of $E_M$, 
their children and their ancestors.  
For every  $[B_1]$, $[B_2]\in E_M$ $[B_1]$ is not an ancestor of $[B_2]$. Otherwise by 
Lemma~\ref{L3} we would have $q([B_1])<q([B_2])$.  
Hence, the leaves of $E'(f)$ are exactly the children of elements of $E_M$.
It follows that the tree $E'(f)$ has $\sum_{[B]\in M} t([B])$ leaves. 
Since any sub-tree has no more leaves than the original tree the Claim is proved. 

\medskip
To finish the proof it is enough to notice that the number of leaves of $E(f)$ is equal to 
the number of irreducible components of the curve $f(x,y)=0$ counted without multiplicities. 

\section{Relation with known results}
Let $f(x,y)\in\bC\{x,y\}$ be a singular irreducible power series. Then the Eggers tree $E(f)$ has only 
one leaf, hence $E(f)$ is a chain. By Lemma~\ref{L:E} for every $B\in T(f)$ of finite height 
we have $F_B(z)=C(z^n-b)^m$ for some integers $n>1$, $m\geq0$, and some nonzero $C$ and $b$. 
Every polynomial $F_B$ has exactly one nonzero critical value $F_B(0)$.  
Using Theorems~\ref{Th:main}, \ref{T2} and~\ref{T3} we get the following characterization of special 
values of the pencil $f(x,y)-ty^M=0$: 

\begin{itemize}
\item the pencil $f(x,y)-ty^M=0$ has a special value $\tau\neq0$ if and only if there exists $B\in T(f)$ such that 
         $q(B)=M$, in this case there is only one nonzero special value $\tau=F_B(0)$.
\item the pencil $f(x,y)-ty^M=0$ has a special value $\tau=0$ if and only if there exists $B\in T(f)$ such that 
         $q(B)>M$.
\end{itemize}

The numbers $q(B)$, for $B\in T(f)$ of finite height, are so called polar invariants of $f=0$ relative to $y=0$. 
They can be defined in a coordinate free way and computed using invariants of singularity of $f=0$ and intersection  
multiplicity of $f=0$ and $y=0$ (see \cite{GB-P}).
The above characterization in the language of polar invariants is given in \cite{GB-P}, Proposition~1.1.

\medskip
In \cite{L-M} (see also \cite{L-M-P}) the authors considered the pencil $f(x,y)-x^N=0$. They distinguished 
one of edges of the Newton diagram of $f$. Then they defined the complex polynomial  $w_T$ 
associated with this edge and proved that critical values of $w_T$ are special values 
of the pencil $f(x,y)-x^N=0$. They also estimated in Theorem~3.1 the total number of special values. 

If if we interchange the r\^ole of variables $x$ and $y$ then $w_T$ is equal to one of polynomials $F_B$, 
where $q(B)=N$. Observe the analogy between Theorem~\ref{T2} of this paper applied to $F_B$ and 
the first part of the quoted result. 

\end{document}